\newcommand{\kom}[1]{}
 \def\1{\raisebox{2pt}{\rm{$\chi$}}}
\def\a{{\bf a}}
\newtheorem{theorem}{Theorem}[section]
\newtheorem{proposition}[theorem]{Proposition}
\newcommand{\R}{{\mathbb R}}
 \newcommand{\eps}{{\varepsilon}}
 \def\1{\raisebox{2pt}{\rm{$\chi$}}}
\newcommand{\abs}[1]{\left|#1\right|}
\newcommand{\norm}[1]{\left|\left|#1\right|\right|}
\newcommand{\Rn}{\mathbb{R}^n}
\def\vint_#1{\mathchoice%
          {\mathop{\kern 0.2em\vrule width 0.6em height 0.69678ex depth -0.58065ex
                  \kern -0.8em \intop}\nolimits_{\kern -0.4em#1}}%
          {\mathop{\kern 0.1em\vrule width 0.5em height 0.69678ex depth -0.60387ex
                  \kern -0.6em \intop}\nolimits_{#1}}%
          {\mathop{\kern 0.1em\vrule width 0.5em height 0.69678ex
              depth -0.60387ex
                  \kern -0.6em \intop}\nolimits_{#1}}%
          {\mathop{\kern 0.1em\vrule width 0.5em height 0.69678ex depth -0.60387ex
                  \kern -0.6em \intop}\nolimits_{#1}}}
\def\vintslides_#1{\mathchoice%
          {\mathop{\kern 0.1em\vrule width 0.5em height 0.697ex depth -0.581ex
                  \kern -0.6em \intop}\nolimits_{\kern -0.4em#1}}%
          {\mathop{\kern 0.1em\vrule width 0.3em height 0.697ex depth -0.604ex
                  \kern -0.4em \intop}\nolimits_{#1}}%
          {\mathop{\kern 0.1em\vrule width 0.3em height 0.697ex depth -0.604ex
                  \kern -0.4em \intop}\nolimits_{#1}}%
          {\mathop{\kern 0.1em\vrule width 0.3em height 0.697ex depth -0.604ex
                  \kern -0.4em \intop}\nolimits_{#1}}}
\newcommand{\kint}{\vint}
\newcommand{\intav}{\vint}
\newcommand{\aveint}[2]{\mathchoice%
          {\mathop{\kern 0.2em\vrule width 0.6em height 0.69678ex depth -0.58065ex
                  \kern -0.8em \intop}\nolimits_{\kern -0.45em#1}^{#2}}%
          {\mathop{\kern 0.1em\vrule width 0.5em height 0.69678ex depth -0.60387ex
                  \kern -0.6em \intop}\nolimits_{#1}^{#2}}%
          {\mathop{\kern 0.1em\vrule width 0.5em height 0.69678ex depth -0.60387ex
                  \kern -0.6em \intop}\nolimits_{#1}^{#2}}%
          {\mathop{\kern 0.1em\vrule width 0.5em height 0.69678ex depth -0.60387ex
                  \kern -0.6em \intop}\nolimits_{#1}^{#2}}}
\newcommand{\ud}{\, d}
\newcommand{\half}{{\frac{1}{2}}}
\newcommand{\ol}{\overline}
\newcommand{\dist}{\operatorname{dist}}
\newcommand{\vp}{\varphi}
\newcommand{\tr}{\operatorname{tr}}
\renewcommand{\a}{\alpha}
\newcommand{\re}{\mathbb{R}}
\newcommand{\rn}{\mathbb{R}^n}
\newcommand{\ut}{u_{\eps}}
\begin{document}

\title[Gradient walk and $p$-harmonic functions]{Gradient walk and  $p$-harmonic functions}

\author[Luiro]{Hannes Luiro}
\address{Department of Mathematics and Statistics, University of
Jyv\"askyl\"a, PO~Box~35 (MaD), FI-40014 Jyv\"askyl\"a, Finland}
\email{hannes.s.luiro@jyu.fi}

\author[Parviainen]{Mikko Parviainen}
\address{Department of Mathematics and Statistics, University of
Jyv\"askyl\"a, PO~Box~35 (MaD), FI-40014 Jyv\"askyl\"a, Finland}
\email{mikko.j.parviainen@jyu.fi}


\date{October, 2015}
\keywords{Diffusion representation, Feynman-Kac formula,  Markov chain, $p$-Laplacian, tug-of-war with noise} \subjclass[2010]{35J92, 60H30, 60J05, 60J60}

\begin{abstract}
We consider a class of stochastic processes and establish its connection to $p$-harmonic functions. In particular, we obtain stochastic approximations that converge uniformly to a $p$-harmonic function, with an explicit convergence rate, and also obtain a precise diffusion representation in continuous time.   The main difficulty is how to deal  with the zero set of the gradient of the underlying function.   
\end{abstract}

\maketitle


\section{Introduction}

A connection between a stochastic game called a tug-of-war with noise and $p$-Laplace equation
\[
\begin{split}
\operatorname{div}(\abs{\nabla u}^{p-2}\nabla u)=0,
\end{split}
\]
 was first discovered by Peres and Sheffield in \cite{peress08}, see also \cite{peresssw09}. In one of the key results they show that if the gradient of a $p$-harmonic payoff function is nonvanishing and a player follows the gradient strategy, then the game value is close  to the $p$-harmonic function.  

However, the zeros of the gradient pose deep problems both in the theory of PDEs and  tug-of-war games. This is the main difficulty we encounter in this paper. 
To be more precise, we study a gradient walk i.e.\ we fix gradient strategies for both the players, but we do not assume that the gradient is nonvanishing. A motivation for such an approach stems partly from desire to understand a local
 behavior of stochastic processes related to $p$-harmonic functions as well as  to obtain a diffusion representation in continuous time for  $p$-harmonic functions. 

The main results show that the expectation under the gradient walk approximates a $p$-harmonic function, Theorems \ref{thm:main-1} (uniform convergence with respect to the step size) and \ref{thm:main-1-rate} (explicit rate with respect to the step size), without any assumptions on the zero set of the gradient of the underlying $p$-harmonic function. Naturally, the set where the gradient is small requires a special attention in terms of how we define the process. 

If the zero set of the gradient is known to be a finite set of points, we develop a technique which is more flexible with respect to how we define the process at the zero set of the gradient. A natural choice at those points is to choose the next point at random according to a uniform probability distribution. Such a process  has the approximation property as shown in Theorem \ref{thm:main-many}, and the continuous time version gives an exact diffusion presentation of a $p$-harmonic function, Theorem \ref{thm:main-1-cont-time}.  Such stochastic approximations seem to be deeply connected to the structure of the zero set of the gradient for a $p$-harmonic function.  In the plane, the zero set of the gradient is known to be discrete \cite{bojarskii87}, but in higher dimensions, understanding structures of the zero set is a difficult open problem.



\subsection{Background}
Taking the average over  the usual Taylor expansion
\[
u(x+y)=u(x)+\nabla u(x)\cdot y
+\frac12 D^2u(x)y\cdot y+O(|y|^3),
\]
over $B(0,\eps)$, we obtain
\begin{equation}
\label{exp.lapla} u(x) -  \intav_{B(0,\eps)} u(x+y)\ud y = -
  \frac{\eps^2}{2(n+2)}\Delta u (x)  + O(\eps^3),
\end{equation}
when $u$ is smooth. Above we used the notation
\[
\kint_{ B(0,\eps)} u(x+y) \ud y=\frac{1}{\abs{B(0,\eps)}}\int_{ B(0,\eps)} u(x+y) \ud y.
\]
On the other hand, by assuming $\nabla u(x)\neq 0$, evaluating the Taylor expansion with $y=\pm \eps\frac{\nabla u(x)}{\abs{\nabla
u(x)}}$, and summing up we get
\begin{equation}\label{exp.infty}
\begin{split}
u(x)&-\frac{1}{2} \left\{ u \left(x+\eps\frac{\nabla u(x)}{\abs{\nabla
u(x)}} \right)+ u\left(x-\eps\frac{\nabla u(x)}{\abs{\nabla u(x)}}\right)\right\}\\
&= - \frac{\eps^2}{2} \Delta^N_\infty u (x) + O(\eps^3),
\end{split}
\end{equation}
where $\Delta_\infty^N u=\abs{\nabla u}^{-2}\sum_{i,j=1}^{n} u_{ij} u_{i}u_{j}$ is the normalized infinity Laplace operator, and $u_i, u_{ij}$ denote the first and second derivatives respectively.
Next if we multiply \eqref{exp.lapla} and \eqref{exp.infty} by $\beta=(2+n)/(p+n),\ 2\le p<\infty$, and $1-\beta$, and add
up the formulas, we get the normalized $p$-Laplace operator on the right hand side i.e.\
\[
\begin{split}
u(x) =&\frac{1-\beta}{2} \left\{ u \left(x+\eps\frac{\nabla u(x)}{\abs{\nabla
u(x)}} \right)+ u\left(x-\eps\frac{\nabla u(x)}{\abs{\nabla u(x)}}\right)\right\}\\
&+ \beta
\kint_{B(0,\eps)} u(x+y) \ud y + C\eps^2 \Delta_p^N u (x)+O (\eps^3)
\end{split}
\]
as $\eps \to 0$, where $\Delta_p^N u=(\Delta u+(p-2)\Delta_\infty^N u )$ denotes the normalized $p$-Laplacian. The equation $\Delta_p^N u=0$ gives the same solutions as the usual $p$-Laplace operator, see \cite{juutinenlm01, kawohlmp12, julinj12}. If $\Delta_p^N u=0$, then the above formula yields
\begin{equation}
\label{eq:p-Taylor}
\begin{split}
u(x) =&\frac{1-\beta}{2} \left\{ u \left(x+\eps\frac{\nabla u(x)}{\abs{\nabla
u(x)}} \right)+ u\left(x-\eps\frac{\nabla u(x)}{\abs{\nabla u(x)}}\right)\right\}\\
&+ \beta
\kint_{B(0,\eps)} u(x+y) \ud y +O (\eps^3),
\end{split}
\end{equation}
as $\eps\to 0$. A variant of this formula can then be used to characterize the $p$-harmonic functions \cite{manfredipr10}.

The above formula suggests the following Markov chain, which we call the gradient walk: when at $x$ step to $x+\eps\frac{\nabla u(x)}{\abs{\nabla
u(x)}}$ or to $x-\eps\frac{\nabla u(x)}{\abs{\nabla u(x)}}$ with probability $(1-\beta)/2$, respectively, or to a point chosen according to a uniform probability distribution on $B(x,\eps)$ with probability $\beta$. Similarly, this defines one step probability measures at every point, which again induce a probability measure on the space of sequences according to the Kolmogorov construction. Take $u_\eps(x)$ to be the expectation with respect to this probability measure when starting at $x$, stopping when exiting a domain (stopping time $\tau$), and taking the boundary values from a smooth $p$-harmonic function $u$ with $\nabla u\neq 0$ i.e.
\begin{equation}
\label{eq:expect}
\begin{split}
\ut(x):=\mathbb E^x\big[u(x_{\tau})\big].
\end{split}
\end{equation}
 For more details of the stochastic background, see for example \cite{luirops14}. 

If we drop the error term in (\ref{eq:p-Taylor}) and replace $u$ by $u_\eps$, then the resulting formula holds by the Markov property, see \cite{meynt09} Section 3.4.2, and tells us, how to compute the expectation at the point $x$: this is done by summing up the three possible outcomes with the corresponding probabilities. This is the key formula needed in establishing a connection with the expected value and the corresponding $p$-harmonic function in the case  $\nabla u\neq 0$. 

The formula  (\ref{eq:p-Taylor}) also suggest a version of a dynamic programming principle and a version of a tug-of-war with noise with good symmetry properties, see for example in \cite{manfredipr12, luirops13, luirops14}.  

\section{Preliminaries}
\label{sec:setup}
We consider a domain $B(0,1)\subset \Rn,\ n\ge 2,$ and a $p$-harmonic function $u:B(0,1+\gamma)\to\re, \gamma>0$, throughout the paper. Further, let $x\in B(0,1)$, $\eps,\eta>0$, $\abs{\nabla u}>\eta$, $\beta=(2+n)/(p+n)$ and  set
\begin{equation}
\label{eq:1-mitta}
 \mu_{x,1}= \beta\mathcal{L}_{B(x,\eps)}+\frac{1-\beta}{2}\big(\delta_{x+\eps\frac{\nabla u(x)}{|\nabla u(x)|}}+
         \delta_{x-\eps\frac{\nabla u(x)}{|\nabla u(x)|}}\big)\,,
\end{equation}
where $\mathcal L_{B(x,\eps)}$ denotes the uniform distribution in $B(x,\eps)\subset\rn$ and $\delta_x$ the Dirac measure at $x$.
In this notation, the equation (\ref{eq:p-Taylor}) can be written as 
\begin{equation}\label{eq:smoothpoint}
\bigg{|}\int_{\rn} u(y)\,d\mu_{x,1}(y)\,-u(x)\,\bigg{|}\,\leq\,C\eps^3\,.
\end{equation}


Another technical tool we use repeatedly is the fact that the expected value of the distance to a \textit{fixed} point increases at every step in the gradient walk at a certain rate. 
\begin{proposition}\label{aux1}
For all $x\in\rn$, $\nu\in S^{n-1}=\partial B(0,1)$, $\eps>0$ and $\beta\in (0,1]$,  it holds that  
\begin{equation}
\label{eq:drift-eq}
\frac{1-\beta}{2}\big(|x+\eps \nu|+|x-\eps \nu|\big)+\beta\intav_{B(x,\eps)}|y|dy\,\geq\,|x|+C(n)\beta\frac{\eps^2}{2(|x|+\eps)}\,,
\end{equation}
where $C(n)\to 1$ as $n\to\infty$.
\end{proposition}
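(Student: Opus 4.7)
The plan is to treat the two terms on the left-hand side separately, using only convexity and elementary vector inequalities so that nothing has to be assumed about the relative size of $|x|$ and $\eps$ (this is what forces the denominator $|x|+\eps$ rather than $|x|$). For the two-point part, the triangle inequality in $\rn$ applied to the vectors $x+\eps\nu$ and $x-\eps\nu$ (which sum to $2x$) immediately gives
\[
\tfrac{1-\beta}{2}\big(|x+\eps\nu|+|x-\eps\nu|\big)\,\geq\,(1-\beta)|x|,
\]
so all of the "gain" must come from the ball average.

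For the ball average, I would first reduce to the component of $z$ perpendicular to $x$. Rotate coordinates so that $x=(|x|,0,\dots,0)$ and write $z=(z_1,z')\in\R\times\R^{n-1}$. Because $B(0,\eps)$ is symmetric under $z_1\mapsto -z_1$, I may symmetrize the integrand:
\[
\intav_{B(x,\eps)}|y|\,dy \;=\; \intav_{B(0,\eps)}\tfrac12\Big(\sqrt{(|x|+z_1)^2+|z'|^2}+\sqrt{(|x|-z_1)^2+|z'|^2}\Big)\,dz.
\]
Now the two vectors $(|x|+z_1,z')$ and $(|x|-z_1,z')$ in $\rn$ sum to $(2|x|,2z')$, so the triangle inequality gives
\[
\sqrt{(|x|+z_1)^2+|z'|^2}+\sqrt{(|x|-z_1)^2+|z'|^2}\;\geq\;2\sqrt{|x|^2+|z'|^2}.
\]
This is the key geometric step that replaces a Taylor-expansion argument (which would fail near $x=0$) by something valid globally.

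Next I use the elementary identity $\sqrt{a^2+b^2}-a = b^2/(\sqrt{a^2+b^2}+a)$ with $a=|x|$ and $b=|z'|\leq\eps$, together with $\sqrt{|x|^2+|z'|^2}+|x|\leq 2|x|+\eps\leq 2(|x|+\eps)$, to obtain the pointwise bound
\[
\sqrt{|x|^2+|z'|^2}\;\geq\;|x|+\frac{|z'|^2}{2(|x|+\eps)}.
\]
Averaging over $B(0,\eps)$ and using $\intav_{B(0,\eps)}|z'|^2\,dz=(n-1)\frac{\eps^2}{n+2}$ yields
\[
\intav_{B(x,\eps)}|y|\,dy\;\geq\;|x|+\frac{n-1}{n+2}\cdot\frac{\eps^2}{2(|x|+\eps)}.
\]
Adding this, weighted by $\beta$, to the two-point estimate produces the claim with $C(n)=\tfrac{n-1}{n+2}\to 1$. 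The only step that requires real care is the pairing/triangle inequality that eliminates $z_1$; everything else is a routine reduction, and there is no genuine obstacle since convexity of $|\,\cdot\,|$ already guarantees Jensen's inequality $\intav|y|\geq|x|$, and the whole point is to quantify the gap.
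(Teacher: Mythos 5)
Your proof is correct and follows essentially the same route as the paper: triangle inequality for the two-point term, symmetrization of the ball average across the hyperplane orthogonal to $x$ (your $z_1\mapsto -z_1$ is exactly the paper's reflection $P$), the Pythagoras-type bound $\sqrt{|x|^2+|z'|^2}\geq |x|+\frac{|z'|^2}{2(|x|+\eps)}$, and then the second moment of the orthogonal component. The only (harmless) differences are that you compute $\intav_{B(0,\eps)}|z'|^2\,dz=\frac{n-1}{n+2}\eps^2$ exactly, giving the explicit constant $C(n)=\frac{n-1}{n+2}$ where the paper settles for a cruder lower bound with a parameter $c$, and your coordinate formulation covers $x=0$ without the paper's separate approximation step.
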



\begin{proof}
Observe first that $|x+\eps \nu|+|x-\eps \nu|\geq 2|x|$  by the triangle inequality. Assume for now  $x\not=0$, and denote by $\langle x\rangle^{\bot}$ the space orthogonal to $x$, by 
$y^{\bot}$ the projection of $y\in\rn$ onto $\langle x\rangle^{\bot}$ and by $P(y)$ the reflection of $y$ with respect to $\langle x\rangle^{\bot}$. 
Then $P$ is a linear isometry, thus we can write
\begin{align*}
\intav_{B(x,\eps)}|y|dy=&\intav_{B(0,\eps)}\frac{|x+y|+|x+P(y)|}{2}\,dy\,\geq \intav_{B(x,\eps)}\frac{|x+y+x+P(y)|}{2}dy\\
=&\intav_{B(0,\eps)}|x+y^{\bot}|\,dy\,,
\end{align*}
where we used the triangle inequality and the fact $y+P(y)=2y^{\bot}$. By Pythagoras' theorem, the following estimate holds for all $a,b\in\rn$, $a\cdot b=0\,$: 
\begin{align*}
|a+b|&=(|a|^2+|b|^2)^{\frac{1}{2}}=|a|+\int_{|a|^2}^{|a|^2+|b|^2}\frac{1}{2\sqrt{t}}dt\,\\
&\geq |a|+\frac{|b|^2}{2(|a|^2+|b|^2)^{\frac{1}{2}}}\,=\,|a|+\frac{|b|^2}{2|a+b|}\,.
\end{align*}
This estimate implies that
\begin{align*}
\intav_{B(0,\eps)}|x+y^{\bot}|\,dy\,&\geq |x|+\intav_{B(0,\eps)}\frac{|y^{\bot}|^2}{2|x+y^{\bot}|}\,dy\,\\
&\geq |x|+\frac{1}{2(|x|+\eps)}\intav_{B(0,\eps)}|y^{\bot}|^2\,dy\,.
\end{align*}
Finally, it is easy to see that for $0<c<1$ we have  
\begin{align*}
\intav_{B(0,\eps)}|y^{\bot}|^2\,dy\,&=\,\frac{n-1}{n}\intav_{B(0,\eps)}|y|^2\,dy\,\geq\,\frac{n-1}{n}\frac{|B(0,\eps)\setminus 
B(0,c\eps)|}{|B(0,\eps)|}|c\eps|^2\,\\
&=\frac{n-1}{n}(1-c^n)|c\eps|^2\,.\\
\end{align*}
By choosing $c$ suitably depending on $n$ yields the claim by combining the above estimates.

 If $x=0$, take nonzero approximating sequence $x_i\to 0$, and use the above result.
\end{proof}

\section{Immediate evaluation of error}

Above we considered the domain $B(0,1)$, and a  $p$-harmonic function $u:B(0,1+\gamma)\to\re$, $\gamma>0$, under the assumption of the non vanishing gradient. However, as already pointed out the difficulty lies in the zero set of the gradient, and now we drop the assumption $\nabla u\neq 0$. Define the one step probability measures
\begin{equation}\nonumber
 \mu_{x,2}=\delta_{x+\eps\frac{x}{|x|}}\text{ if }x\not= 0\,\text{ and }\mu_{0,2}=\delta_{\eps|e_1|}\,, 
\end{equation}
and
\begin{equation}\nonumber
\mu_{x}=\begin{cases}
         \mu_{x,1},\text{ if }|\nabla u(x)|\geq \eta\,\,,\,\text{ and }\\
         \mu_{x,2}, \text{ if }|\nabla u(x)|< \eta\,,
        \end{cases}
\end{equation}
where $\mu_{x,1}$ was defined in (\ref{eq:1-mitta}). The first version of a gradient walk which is a Markov chain $\{x_0,x_1,\ldots\}\subset \Rn$ starting at $x_0$ is determined by the transition probabilities
$\mu_{x}\,$.
Moreover, for $x\in B(0,1)$, we define a value of the gradient walk to be given by
\begin{equation}
\label{eq:value-walk-1}
\begin{split}
\ut(x):=u_{\eps,\eta,u}(x):=\mathbb{E}^x[u(x_{\tau})]
\end{split}
\end{equation}
where $\tau$ is the first exit time from $B(0,1)$, and the notation is also otherwise the same as in (\ref{eq:expect}), and $u_{\eps}:=u$ outside $B(0,1)$. 
Since there is a fixed non-zero probability that the walk approaches $\partial B(0,1)$ at least by 
a step comparable to $\eps$, it follows that $\tau$ is finite almost surely, and the same holds for other stopping times defined in this paper as well.

The name of the section reflects the fact that in the bad set $\{ |\nabla u(x)|< \eta \}$ we obtain comparison result implying Theorem \ref{thm:main-1} in one step, cf.\ (\ref{eq:impossible2}) below. In contrast, in Section  \ref{sec:delayed} we define the bad set slightly differently and wait until we exit the bad set. The immediate evaluation of error uses properties of the stochastic process in the bad set in a subtle way, but on the other hand  does not require any assumptions on the structure of the zero set of the gradient. It also allows us to obtain explicit convergence rate. 

Our first main theorem states that the value of the gradient walk converges uniformly to the underlying $p$-harmonic function. 
\begin{theorem}
\label{thm:main-1}
Let  $u:B(0,1+\gamma)\to\re$ be a $p$-harmonic function, $2\le p<\infty$. Let $\eta>0$ and let $u_\eps$  be the value of the gradient walk given by (\ref{eq:value-walk-1}). Then for any $C>1$, there is $\eps_0>0$ such that\begin{equation}\nonumber
\norm{u_{\eps}-u}_{L^{\infty}(B(0,1))}\leq C \eta 
\end{equation}
for all $0<\eps<\eps_0$.
\end{theorem}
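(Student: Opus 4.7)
The plan is to prove both $u_\eps(x) \leq u(x) + C\eta$ and $u_\eps(x) \geq u(x) - C\eta$ by a barrier/optional-stopping argument with the comparison functions
\[
w^\pm(y) := u(y) \pm \lambda|y|,
\]
where $\lambda := \eta + C'\eps^{\alpha}$ and $\alpha \in (0,1)$ is the H\"older exponent from the local $C^{1,\alpha}$-regularity of the $p$-harmonic function $u$. I would show that $w^-$ is a supermartingale and $w^+$ a submartingale under the gradient walk kernel $\mu_{\cdot}$. Since $\tau<\infty$ almost surely and both barriers are bounded on the relevant domain, optional stopping then yields
\[
|u_\eps(x)-u(x)| \;\leq\; \lambda\bigl(\mathbb{E}^x[|x_\tau|]-|x|\bigr) \;\leq\; \lambda(1+\eps),
\]
where the final step uses $|x|\leq 1$ and $|x_\tau|\leq 1+\eps$. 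Because $\lambda(1+\eps)\to \eta$ as $\eps\to 0$, the prescribed bound $C\eta$ with $C>1$ is achieved by taking $\eps$ small depending on $\eta$ and $C$.

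The super/submartingale property reduces to a one-step check at each $x\in B(0,1)$, split by whether $x$ lies in the \emph{good set} $\{|\nabla u|\geq\eta\}$ or the \emph{bad set} $\{|\nabla u|<\eta\}$. On the good set the transition is $\mu_{x,1}$, and I would combine equation (2.2), which bounds the one-step change of $u$ by $O(\eps^3)$, with Proposition \ref{aux1}, which produces a strictly positive drift of $|y|$ of order $\eps^2/(|x|+\eps)$; since $\lambda\geq\eta>0$ is bounded below uniformly, the $\lambda$-drift dominates the $O(\eps^3)$ error once $\eps$ is small relative to $\eta$, giving the correct sign. On the bad set the transition is the deterministic step $\mu_{x,2}=\delta_{x+\eps \hat x}$, under which $|y|$ grows by exactly $\eps$, while the $C^{1,\alpha}$-regularity of $u$ together with $|\nabla u(x)|<\eta$ gives
\[
|u(x+\eps\hat x) - u(x)| \;\leq\; \eps|\nabla u(x)| + C\eps^{1+\alpha} \;\leq\; \eta\eps + C\eps^{1+\alpha},
\]
so the choice $\lambda \geq \eta + C\eps^{\alpha}$ is just enough to absorb this error into the $\lambda\eps$ barrier drift. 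The degenerate case $x=0$ is handled identically via $|\eps e_1|-0=\eps$ and the $x=0$ clause of Proposition \ref{aux1}.

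The main obstacle is reconciling the two very different scales in the two regimes: in the good set the one-step error in $u$ is cubic ($\eps^3$) while the barrier drift is only quadratic ($\eps^2$); in the bad set the deterministic step produces a potentially much larger error of order $\eta\eps$ matched by a linear drift $\eps$. The specific choice $\lambda=\eta+C'\eps^{\alpha}$ is essentially forced: it must be at least $\eta+O(\eps^{\alpha})$ to absorb the bad-set error, yet it tends to $\eta$ as $\eps\to 0$ so that the final bound $\lambda(1+\eps)$ approaches $\eta$. A secondary technical point is the $C^{1,\alpha}$ control of $u$ used on the bad set, which is provided by interior regularity of $p$-harmonic functions and, crucially, requires no structural information about the critical set $\{\nabla u=0\}$ -- this is precisely the sense in which the section handles the bad set by an \emph{immediate}, one-step barrier comparison rather than by tracking how long the walk spends near the critical set.
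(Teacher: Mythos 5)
Your argument is correct, and it is essentially the ``stochastic'' alternative that the authors themselves mention (and decline to use) right after their proof: you estimate the one-step expectation along the walk and conclude by optional stopping, whereas the paper argues by contradiction at a $\kappa$-approximate maximum point of $g-f$, with $g=|u_\eps-u|$ and the cone $f(x)=c\eta(c-|x|)$, using only the one-step dynamic programming identity $u_\eps(x_0)=\int u_\eps\,d\mu_{x_0}$ and no martingale machinery. The ingredients are the same in both proofs: on the good set $\{|\nabla u|\ge\eta\}$ the expansion (\ref{eq:smoothpoint}) (error $O(\eps^3)$, with a constant depending on $\eta$ through interior $C^3$ bounds where the gradient is bounded away from zero, so your phrase ``$\eps$ small relative to $\eta$'' is exactly right and matches the paper's choice of $\eps_0$) is beaten by the $\eps^2$ drift of Proposition \ref{aux1}; on the bad set the radial step increases $|x|$ by exactly $\eps$ while the increment of $u$ is at most $\eta\eps+o(\eps)$. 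The differences: your barriers $u\pm\lambda|y|$ with $\lambda=\eta+C'\eps^{\alpha}$ invoke the quantitative $C^{1,\alpha}$ estimate (\ref{eq:c1a}), while the paper gets by with plain $C^1$ continuity of $\nabla u$, hiding the bad-set error in the factor $c>1$; in exchange your route yields the clean explicit bound $(\eta+C'\eps^{\alpha})(1+\eps)$ with no approximate-maximum bookkeeping, at the price of justifying optional stopping, which is harmless here since the stopped walk stays in $\overline{B(0,1+\eps)}$ (so the barriers are bounded) and $\tau<\infty$ almost surely, as the paper notes. Both routes give the bound $C\eta$ for any $C>1$ once $\eps<\eps_0(\eta,C,u)$.
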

\begin{proof}
Let us denote $g(x):=|\ut(x)-u(x)\,|$. Choose an auxiliary comparison function $f(x)=c\eta(c-|x|)$, where $c>1$. We will establish that $g\leq f$, which implies the claim. 
To this end, assume the opposite, so that
\begin{equation}\label{eq:counter}
M:=\sup_{x\in B(0,1)}(g(x)-f(x))>0\,.
\end{equation}
Suppose that $M$ above is achieved at $x_0\in B(0,1)$ up to an arbitrary small error term $\kappa>0$
\begin{equation}\label{eq:maxpoint}
g(x_0)-f(x_0)\geq M-\kappa\,.
\end{equation} 
First we consider the case $|\nabla u(x_0)|\geq \eta$.
By the definition of $\ut$ and (\ref{eq:smoothpoint}) it follows that 
\begin{align}
\label{eq:key-smooth-comp}
g(x_0)-&f(x_0)\nonumber\\
&=|\ut(x_0)-u(x_0)|-f(x_0)=\big|\int_{\rn}\ut(y)\,d\mu_{x_0,1}-u(x_0)\,\big|-f(x_0)\nonumber \\
&\leq\int_{\rn}|\ut(y)-u(y)|\,d\mu_{x_0,1}(y)\,+C\eps^3\,-f(x_0)\,\nonumber\\
& =\int_{\rn}g(y)-f(y)\,d\mu_{x_0,1}(y)\,+\int_{\rn}f(y)\,d\mu_{x_0,1}(y)\,-f(x_0)+C\eps^3\,\nonumber\\
& \leq M+\int_{\rn}f(y)\,d\mu_{x_0,1}(y)\,-f(x_0)+C\eps^3\,.
\end{align}
Combining this with (\ref{eq:maxpoint}) implies
\begin{equation}\label{eq:impossible1}
f(x_0)\leq \int_{\rn}f(y)\,d\mu_{x_0,1}(y)\,+C\eps^3+\kappa\,.
\end{equation}
However, Proposition \ref{aux1} implies that this can not be true for any $x\in B(0,1)$ 
for our cone-function $f$ if $\eps$ and $\kappa$ are small enough. 

Consider then the case  $|\nabla u(x_0)|< \eta$ (and $x_0\not=0$, the case $x_0=0$ is similar). In this case it follows that
\begin{align*}
g(x_0)&-f(x_0)=|\ut(x_0)-u(x_0)|-f(x_0)\\
&=\big|\int_{\rn}\ut(y)\,d\mu_{x_0,2}-u(x_0)\,\big|-f(x_0)\\
&=|\ut(x_0+\eps\frac{x_0}{|x_0|})-u(x_0)|-f(x_0)\,\\
&\leq|\ut(x_0+\eps\frac{x_0}{|x_0|})-u(x_0+\eps\frac{x_0}{|x_0|})|+|u(x_0+\eps\frac{x_0}{|x_0|})-u(x_0)|-f(x_0)\\
& = g(x_0+\eps\frac{x_0}{|x_0|})-f(x_0+\eps\frac{x_0}{|x_0|})+f(x_0+\eps\frac{x_0}{|x_0|})-f(x_0)\\
&\,\,\,\,\,\,+|u(x_0+\eps\frac{x_0}{|x_0|})-u(x_0)|\,\\
& \leq M-c\eta\eps+|u(x_0+\eps\frac{x_0}{|x_0|})-u(x_0)|\,, 
\end{align*}
implying  by (\ref{eq:maxpoint}) that
\begin{equation}\label{eq:impossible2}
|u(x_0+\eps\frac{x_0}{|x_0|})-u(x_0)|\geq c\eta\eps -\kappa\,.
\end{equation}
\sloppy However by using the fact that $c>1$, $|\nabla u(x_0)|< \eta$, and $u$ is $C^1$, it follows that (\ref{eq:impossible2}) can not be true
for $\eps\le \eps_0$. Summing up, we have shown that (\ref{eq:counter}) cannot be valid, and thus $\norm{u_{\eps}-u}_{L^{\infty}(B(0,1))}\leq c^2\eta $. 
\end{proof}

We could also have used stochastic approach in the proof above, consider the expectation over a single step, estimate the accumulation of the error, and to use optional stopping theorem, cf.\ \cite[Theorem 2.4]{peress08} or \cite[Theorem 4.1]{manfredipr12}. Our technique provides an efficient alternative in the present setting. The key incredients are approximation property (\ref{eq:smoothpoint}) and $C^1$ regularity of the limit $u$ as well as the drift property (\ref{eq:drift-eq}). The fact that $u$ is $p$-harmonic is only utilized through these properties.

\subsection{Convergence rate}

In this section,  we aim at obtaining an explicit convergence rate of the gradient walk with respect to $\eps$.  First recall the notation with multi-index $\sigma$ and $\a\in (0,1)$
\[
\begin{split}
\norm{u}_{C^{1,\a}(B)}&=\sum_{\abs{\sigma}\le 1}||D^{\sigma} u||_{C^{\a}(B)},\qquad 
\norm{u}_{C^{\a}(B)}=\sup_{B}\abs{u}+\abs{u}_{C^{\a}(B)},\\
\abs{u}_{C^{\a}(B)}&=\sup_{x,y\in B,x\neq y}\frac{\abs{u(x)-u(y)}}{\abs{x-y}}^{\a}.
\end{split}
\]
We will also use $\norm{u}_{C^{3,\a}}$ below and the definition is analogous to the one above. 

Let $u:B(0,1+\gamma)\to \R$, $\gamma>0$ be a $p$-harmonic function. Then there is $\a=\a(n,p)\in (0,1)$ and $C=C(n,p,\gamma,\norm{u}_{L^{\infty}(B(0,1+\gamma))})$ such that
\begin{equation}
\label{eq:c1a}
\begin{split}
\norm{u}_{C^{1,\a}(B(0,1))}\le C,
\end{split}
\end{equation}
see for example \cite{uraltseva68,uhlenbeck77,evans82,dibenedetto83,tolksdorf84}. Note that $p$-Laplacian degenerates when the gradient vanishes, and $C^{1,\a}$-regularity is optimal. 
We will also use another standard estimate. 
\begin{theorem}
\label{thm:C3}
Let $u$ be $p$-harmonic function in $B(0,1)$ such that
\[
\begin{split}
\half \le \abs{\nabla u(x)} \text{ in }B(0,1).
\end{split}
\] 
Then there is $\a=\a(n,p)\in (0,1)$ and $C=C(n,p,\norm{u}_{L^{\infty}(B(0,1))})$ such that
$u\in C^{3,\a}(B(0,\half))$ and
\[
\begin{split}
\norm{u}_{C^{3,\a}(B(0,\half))}\le C.
\end{split}
\] 
\end{theorem}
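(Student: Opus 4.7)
The approach is to exploit the fact that where $\abs{\nabla u}$ is bounded below, the $p$-Laplace equation becomes uniformly elliptic with smooth coefficients, and then iterate interior Schauder estimates.

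First, I would rewrite $\operatorname{div}(|\nabla u|^{p-2}\nabla u)=0$ in non-divergence form. Dividing by $\abs{\nabla u}^{p-2}$ (which is legitimate since $\abs{\nabla u}\ge \half$) the equation becomes $a^{ij}(\nabla u)\,u_{ij}=0$, where
\[
a^{ij}(q) = \delta_{ij} + (p-2)\,\frac{q_i q_j}{\abs{q}^2}.
\]
On $\{\abs{q}\ge \half\}$ the map $q\mapsto a^{ij}(q)$ is $C^\infty$, and the eigenvalues of the matrix $(a^{ij}(q))$ are $1$ (with multiplicity $n-1$) and $p-1$. Hence the linearised equation is uniformly elliptic, with ellipticity constants depending only on $p$.

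Next I would invoke the global $C^{1,\a}$ estimate from \eqref{eq:c1a} on a slightly larger ball. This gives $\nabla u \in C^{\a}(B(0,1))$, quantitatively in terms of $n,p$ and $\norm{u}_{L^{\infty}(B(0,1))}$. Composition with the smooth map $a^{ij}$ yields $a^{ij}(\nabla u)\in C^{\a}(B(0,1))$ with a bound depending on the same quantities. Interior Schauder estimates applied to the linear uniformly elliptic equation $a^{ij}(\nabla u)\,u_{ij}=0$ then give $u\in C^{2,\a}(B(0,\tfrac34))$, again with a quantitative bound.

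To reach $C^{3,\a}$ I would differentiate the equation: for $v:=\partial_k u$, applying $\partial_k$ to $a^{ij}(\nabla u)\,u_{ij}=0$ yields
\[
a^{ij}(\nabla u)\,v_{ij} + \partial_{q_l}a^{ij}(\nabla u)\,u_{ij}\,v_l = 0,
\]
which is again uniformly elliptic in non-divergence form, now with a first-order coefficient $c^l := \partial_{q_l}a^{ij}(\nabla u)\,u_{ij}$. The $C^{2,\a}$ bound on $u$ together with smoothness of $a^{ij}$ on $\{\abs{q}\ge \half\}$ shows $c^l\in C^{\a}(B(0,\tfrac34))$, quantitatively. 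Another application of Schauder estimates gives $v\in C^{2,\a}(B(0,\half))$ with the appropriate bound, and since this holds for each $k=1,\ldots,n$, we conclude $u\in C^{3,\a}(B(0,\half))$ with a constant depending only on $n,p$ and $\norm{u}_{L^{\infty}}$.

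There is no real analytical obstacle here: the gradient lower bound removes the degeneracy entirely, so the equation is a standard uniformly elliptic quasilinear PDE and solutions are in fact $C^\infty$ (even real analytic). The only care needed is bookkeeping — tracking how the Schauder constants at each bootstrap step depend on $n,p$, on the fixed gradient lower bound $\half$ (entering through the smoothness bounds for $a^{ij}$), and on $\norm{u}_{L^\infty}$, while shrinking the ball from $B(0,1)$ to $B(0,\tfrac34)$ to $B(0,\half)$ at successive steps.
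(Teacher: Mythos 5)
Your proposal is correct and follows essentially the same route as the paper: write the equation in non-divergence form $a_{ij}(\nabla u)u_{ij}=0$, use the $C^{1,\alpha}$ estimate (\ref{eq:c1a}) together with the gradient lower bound to get H\"older coefficients and hence $u\in C^{2,\alpha}$ by Schauder, then differentiate the equation and apply Schauder once more to the resulting uniformly elliptic equation for the directional derivatives to reach $C^{3,\alpha}$ on the smaller ball, tracking the dependence of constants on $n,p,\norm{u}_{L^{\infty}}$. No substantive difference from the paper's argument.
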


The theorem is well-known. It is based for example on using Schauder estimates (see for example \cite{gilbargt01}) combined with the estimate (\ref{eq:c1a}).
To be more precise, consider the normalized $p$-Laplacian $a_{ij}(\nabla u)u_{ij}=0$ using the Einstein summation convention where $u_{ij}$ denotes the second derivatives and
\[
\begin{split}
a_{ij}(q)=\delta_{ij}+(p-2)\frac{q_iq_j}{\abs{q}^2},
\end{split}
\]
and $\delta_{ij}=1$ if $i=j$ and zero otherwise. Observe that $a_{ij}(\cdot)$ is smooth when $q$ us bounded away from zero. Then we have $a_{ij}(\nabla u)\in C^{\a}$ by  (\ref{eq:c1a}), and $u\in C^{2,\a}$ by the Schauder theory and $\half \le \abs{\nabla u}$.
Then the heuristic idea  is differentiating the equation and denoting by $w=D_{\nu} u$ the directional derivative to the direction $\nu$, we get
\[
\begin{split}
a_{ij,k}(\nabla u)w_k u_{ij}+a_{ij}(\nabla u)w_{ij}=0,
\end{split}
\]
where $a_{ij,k}$ denotes  the derivative of $a_{ij}$ with respect to the $k$th variable.  The $C^{2,\a}$-estimate for $w$ depends on the $C^{\a}$-norm for the coefficients, that is, on the $C^{2,\a}$-estimate for $u$. 
Thus the estimate in Theorem \ref{thm:C3} holds with a uniform coefficient $C=C(n,p,\norm{u}_{L^{\infty}(B(0,1))})$. An alternative approach can be built on the divergence form equation and the weak formulation.

Now we consider the gradient walk defined by the one step probability measure
\begin{equation}
 \mu_{x,2}=\delta_{x+\eps\frac{x}{|x|}}\text{ if }x\not= 0\,\text{ and }\mu_{0,2}=\delta_{\eps|e_1|}\,, 
\end{equation}
and
\begin{equation}
\label{eq:explicit-cut-measure}
\mu_{x}=\begin{cases}
         \mu_{x,1},\text{ if }|\nabla u(x)|\geq \eps^{\a'},\\
         \mu_{x,2},\text{ if }|\nabla u(x)|< \eps^{\a'},
        \end{cases}
\end{equation}
where $\a'<\a/2$ is fixed,  $\a$ is given by (\ref{eq:c1a}), and $\mu_{x,1}$ by (\ref{eq:1-mitta}). As before, we set 
\[
\begin{split}
u_{\eps}(x):=\mathbb{E}^x[u(x_{\tau})]\,.
\end{split}
\]
\begin{theorem}
\label{thm:main-1-rate}
Let  $u:B(0,1+\gamma)\to\re$ be a $p$-harmonic function, $2\le p<\infty$,  and let $u_\eps$  be the value of the gradient walk given above. Then  for any $C>1$ there is $\eps_0>0$ 
such that
\begin{equation}
\norm{u_{\eps}-u}_{L^{\infty}(B(0,1))}\leq C \eps^{\a'} 
\end{equation}
for all $0<\eps<\eps_0$.
\end{theorem}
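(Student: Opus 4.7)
The plan is to follow the contradiction scheme from the proof of Theorem~\ref{thm:main-1} with the constant $\eta$ replaced by $\eps^{\alpha'}$; the main new ingredient will be an upgraded version of the approximation identity \eqref{eq:smoothpoint} on the ``good set''. Set $g(x):=|u_\eps(x)-u(x)|$ and compare $g$ against the cone $f(x):=c\,\eps^{\alpha'}(c-|x|)$ for a fixed $c>1$. Assuming toward contradiction that $M:=\sup_{B(0,1)}(g-f)>0$ is attained up to error $\kappa>0$ at some $x_0\in B(0,1)$, I split into two cases according to the threshold in \eqref{eq:explicit-cut-measure}.

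In the case $|\nabla u(x_0)|\geq \eps^{\alpha'}$ the naive error $C\eps^{3}$ in \eqref{eq:smoothpoint} is too crude (it would force $\alpha'\geq 1$), so I refine it by rescaling. Writing $\eta_0:=|\nabla u(x_0)|$, the $C^{1,\alpha}$-bound \eqref{eq:c1a} lets me choose $r=c_0\eta_0^{1/\alpha}$ with $c_0$ so small that $\eta_0/2\leq |\nabla u|\leq 3\eta_0/2$ throughout $B(x_0,r)$. The rescaling
\[
v(y):=\frac{u(x_0+ry)-u(x_0)}{\eta_0 r},\qquad y\in B(0,1),
\]
yields a $p$-harmonic function on $B(0,1)$ (by scaling and translation invariance of the $p$-Laplacian) with $|\nabla v|\geq 1/2$ and $\|v\|_{L^\infty(B(0,1))}\leq 3/2$, so Theorem~\ref{thm:C3} produces a uniform bound $\|v\|_{C^{3,\alpha}(B(0,1/2))}\leq C=C(n,p)$. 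Applying \eqref{eq:smoothpoint} to $v$ at step size $\tilde\eps:=\eps/r$ (which is $<1/2$ once $\eps$ is small) and observing that $\mu_{x_0,1}$ is precisely the pushforward of the corresponding measure for $v$ at the origin under $y\mapsto x_0+ry$, I obtain after undoing the scaling
\[
\Big|\int_{\rn}u(y)\,d\mu_{x_0,1}(y)-u(x_0)\Big|\leq \eta_0 r\cdot C\tilde\eps^{\,3}=\frac{C\eta_0\eps^{3}}{r^{2}}\leq C\eps^{3}\eta_0^{1-2/\alpha}\leq C\eps^{3+\alpha'(1-2/\alpha)},
\]
where I used $r\asymp \eta_0^{1/\alpha}$, $1-2/\alpha<0$, and $\eta_0\geq \eps^{\alpha'}$. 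Mimicking the chain \eqref{eq:key-smooth-comp}--\eqref{eq:impossible1} with this improved error, together with the drift bound $\int f\,d\mu_{x_0,1}-f(x_0)\leq -c'\eps^{2+\alpha'}$ obtained from Proposition~\ref{aux1}, forces $c'\eps^{2+\alpha'}\leq C\eps^{3+\alpha'(1-2/\alpha)}+\kappa$. Since $3+\alpha'(1-2/\alpha)-(2+\alpha')=1-2\alpha'/\alpha>0$ precisely when $\alpha'<\alpha/2$, this is impossible for all small $\eps$ and $\kappa$.

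In the case $|\nabla u(x_0)|<\eps^{\alpha'}$ I repeat the argument from the second half of the proof of Theorem~\ref{thm:main-1} verbatim with $\eta$ replaced by $\eps^{\alpha'}$, reaching the analogue of \eqref{eq:impossible2}, namely $|u(x_0+\eps x_0/|x_0|)-u(x_0)|\geq c\eps^{1+\alpha'}-\kappa$. The $C^{1,\alpha}$-estimate \eqref{eq:c1a} bounds the left side by $\eps\,|\nabla u(x_0)|+C\eps^{1+\alpha}\leq \eps^{1+\alpha'}+C\eps^{1+\alpha}$, and since $\alpha'<\alpha$ this contradicts the preceding inequality once $c>1$ is fixed and $\eps$ is small. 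The degenerate point $x_0=0$ is handled exactly as in the proof of Theorem~\ref{thm:main-1}.

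The principal obstacle is Case 1: producing the improved Taylor error $\eps^{3+\alpha'(1-2/\alpha)}$. Two competing constraints determine the scale $r$: the $C^{1,\alpha}$-regularity forces $r$ to be at most a constant multiple of $\eta_0^{1/\alpha}$ in order that $|\nabla u|$ stay comparable to $\eta_0$ throughout $B(x_0,r)$, whereas shrinking the transferred error $\eta_0\eps^{3}/r^{2}$ pushes $r$ to be as large as possible. The choice $r\asymp \eta_0^{1/\alpha}$ balances the two, and the algebraic condition for the drift $c'\eps^{2+\alpha'}$ supplied by Proposition~\ref{aux1} to dominate this error is exactly $\alpha'<\alpha/2$, which is the hypothesis of the theorem.
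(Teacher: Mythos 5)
Your proposal is correct and follows essentially the same route as the paper: the same rescaling $v(y)=(u(x_0+ry)-u(x_0))/(\eta_0 r)$ at scale $r\asymp \eta_0^{1/\alpha}$ dictated by the $C^{1,\alpha}$ bound, the uniform $C^{3,\alpha}$ estimate of Theorem \ref{thm:C3} to upgrade \eqref{eq:smoothpoint} to an error of order $\eps^{3+\alpha'(1-2/\alpha)}$, and the same comparison-cone contradiction with Proposition \ref{aux1} giving the drift $\eps^{2+\alpha'}$, with the bad set handled via \eqref{eq:c1a} exactly as in Theorem \ref{thm:main-1}. The exponent bookkeeping ($\alpha'<\alpha/2$) matches the paper's $\delta=1+\alpha'(1-2/\alpha)>0$.
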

\begin{proof}
\sloppy
Let $\lambda>0$ be small enough such that $r: =(\lambda/(2C'))^{1/\a}<\gamma$, where $C'$ is the one in the estimate (\ref{eq:c1a}). Suppose that $x\in B(0,1)$ such that $\abs{\nabla u(x)}=\lambda$. It  follows from (\ref{eq:c1a}) that $B(x,r)\subset \{z\in B(0,1+\gamma)\,:\,\frac{\lambda}{2} \le \abs{\nabla u(z)}\le 2\lambda\}$. Then set  
$$
v(y)=\frac{u(yr+x)-u(x)}{\lambda r}
$$ 
so that  $\half \le \abs{\nabla v(y)}\le 2$ in $B(0,1)$, and $v(0)=0$. It follows that $\norm{v}_{L^{\infty}(B(0,1))}\le 2$.
We are in the position of using the estimate from Theorem \ref{thm:C3} for $v$ with a constant $C(n,p)$. 
In particular,  let  $y\in B(0,\frac14)$, $\tilde \eps=\eps/r$ and 
\[
\begin{split}
v\left(y+\frac{\nabla v(y)}{\abs{\nabla v(y)}}\tilde \eps \right)=\frac{u\left(y'+\frac{\nabla u(y')}{\abs{\nabla u(y')}} \frac{\eps}{r} r \right)-u(x)}{\lambda r},
\end{split}
\]
where $y'=x+yr$. Then we have
\[
\begin{split}
&C\tilde \eps^3\\
&\ge \abs{ \beta \kint_{B(0,\tilde \eps)} v\ud y+\frac{1-\beta}2\Bigg\{v\left(y+\frac{\nabla  v(y)}{\abs{\nabla v(y)}} \tilde \eps \right)+v\left(y-\frac{\nabla v(y)}{\abs{\nabla v(y)}} \tilde \eps \right)\Bigg\}-v(y)}\\
&=\bigg|\frac{\beta}{\lambda r} \kint_{B(x,\eps)} u\ud y'+\frac{1-\beta}{2\lambda r}\Bigg\{u\left(y'+\frac{\nabla u(y')}{\abs{\nabla u(y')}} \eps \right)+u\left(y'-\frac{\nabla u(y')}{\abs{\nabla u(y')}}\eps \right)\Bigg\}\\
&\hspace{28 em}-u(y')\bigg|,
\end{split}
\]
and the estimate holds for $y'\in B(x,\frac{r}4)$. Multiplying  the both sides by $\lambda r$, we see that the desired formula holds with the error $C(\eps/r)^3\lambda r=C\eps^3\lambda r^{-2}$.
If we fix $\lambda=\eps^{\a'}$, the error term reads as 
\[
\begin{split}
 \lambda^{1-\frac{2}{\a}} \eps^3=\eps^{\a'(1-\frac{2}{\a})} \eps^3
\end{split}
\]
up to a constant.
On the other hand, if we have $\lambda>\eps^{\a'}$, then the above argument also works and  with  a smaller error term.  To sum up, a counterpart of  formula (\ref{eq:smoothpoint}) holds in the form
\begin{equation}\label{eq:smoothpoint2}
\bigg{|}\int_{\rn} u(y)\,d\mu_{x,1}(y)\,-u(x)\,\bigg{|}\,\leq\,C\eps^{2+\delta}\,,
\end{equation}
where $\delta =1+\a'(1-\frac{2}{\a})>0$. 

Next let $c>1$ and set $f(x)=c\eps^{\a'}(c-\abs{x}).$
Further, denote $g(x):=|\ut(x)-u(x)\,|$, let $\kappa>0$ and assume again thriving for a contradiction that there is $x_0\in B(0,1)$ such that 
\[
\begin{split}
g(x_0)-f(x_0)+\kappa\ge \sup_{x\in B(0,1)}(g(x)-f(x))>0.
\end{split}
\]
Suppose first that $|\nabla u(x_0)|\geq \eps^{\a'}$. By calculation (\ref{eq:key-smooth-comp})  using (\ref{eq:smoothpoint2}), we see that 
$$
f(x_0)\leq \int_{\rn}f(y)\,d\mu_{x_0,1}(y)\,+C\eps^{2+\delta }+\kappa.
$$ However, by Proposition \ref{aux1}  
$$
f(x_0)-C\eps^{2+\a' }\ge \int_{\rn}f(y)\,d\mu_{x_0,1}(y).
$$ The contradiction follows if $\a'<\a/2$.

Then assume that  $|\nabla u(x_0)|< \eta=\eps^{\a'}$.
By (\ref{eq:c1a}), it also holds that 
\[
\begin{split}
\sup_{y\in B(x,\eps)}\abs{u(y)-u(x)-\nabla u(x)\cdot (y-x)}\le C'r^{1+\a}
\end{split}
\]
implying 
\begin{equation}
\label{eq:c1beta}
\begin{split}
\sup_{y\in B(x,\eps)}\abs{u(y)-u(x)}&\le C'\eps^{1+\a}+\eps \abs{\nabla u(x)}\\
&< C'\eps^{1+\a}+\eps^{1+\a'}\le (C'\eps^{\a-\a'}+1) \eps^{1+\a'}.
\end{split}
\end{equation}
Next recall $f$ and observe that there is $\eps_0>0$ 
such that the right hand side of (\ref{eq:c1beta}) is smaller than $c\eps^{1+\a'}$ for every $0<\eps<\eps_0$. Using this, we conclude similarly as in the proof of Theorem \ref{thm:main-1}.
\end{proof}

\section{Delayed evaluation of error}

\label{sec:delayed}

In this section, we study a slightly different technique of showing that the gradient walk approximates a $p$-harmonic function with gradient vanishing in a finite set of points. In contrast to the previous sections, we take a ball neighborhood of a zero set of the gradient and then in that set we delay the evaluation of the error until we exit from the set. The good point in this method is that there is a lot of freedom to choose the probability measure as long as the resulting stochastic process exits the ball neighborhood almost surely. In a sense, a very natural choice is to use the random walk in the zeros of the gradient and otherwise to utilize  the gradient directions. This is the choice we use below. A counterpart of this choice in continuous time allows us to establish a diffusion representation of $p$-harmonic functions. 

\subsection{Diffusion representation of $p$-harmonic functions}

Next we study a continuous time diffusion process related to a $p$-harmonic function. This may be combared to the Feynman-Kac formula in the classical setting. A continuous time tug-of-war game has been previously studied by Atar and Budhiraja in \cite{atarb10} and in the context of option pricing in \cite{nystrompb}. 

Let $u:B(0,1+\gamma)\to \R$, $\gamma>0,$ be a $p$-harmonic function such that  $\abs{\nabla u(x)}>0$ whenever $x\neq x_1$, where $x_1 \in B(0,1)$.
Consider $\mathcal A(x)$, an $n\times n$ matrix, whose entries are given by
\[
\begin{split}
a_{ij}(y)=
\half \begin{cases}
\delta_{ij}+(p-2)\abs{\nabla u(y)}^{-2}u_{i}(y)u_{j}(y), & \text{ when }y\neq x_1\\
\delta_{ij}, &\text{ when }y=x_1,
\end{cases}
\end{split}
\]
where $u_i$ denotes the $i$th partial derivative, and $\delta_{ij}=1$ if $i=j$ and zero otherwise. In $B(0,1+\gamma)\setminus \{x_1\}$, the $p$-harmonic  function $u$ is real analytic and it holds that $\sum_{i,j=1}^na_{ij} u_{ij} =0$.
Also observe that $\min(p-1,1)\abs{\xi}^2\le  \sum_{i,j=1}^n a_{ij}\xi_i\xi_j\le \max(p-1,1)\abs{\xi}^2$ so that $\mathcal A$ is uniformly elliptic.
Thus 
there exists a  strong Markov family $(X(t),\mathbb P_x)$ of solutions to the martingale problem, see  \cite{bass98} Chapter 6, Stroock and  Varadhan \cite{stroockv06}  Chapter 6 and 12
as well as \cite{krylov73}.
Define  a stopping time $\tau$ as a first exit time from $B(0,1)$. Then 
\begin{equation}
\label{eq:ito}
\begin{split}
\vp&(X (t \wedge\tau))
-\vp(x) -\int_0^{t \wedge\tau}      \sum_{i,j=1}^na_{ij}(X(s)) \vp_{ij}(X(s)) \ud s,
\end{split}
\end{equation}
where $t\wedge \tau=\min(t,\tau)$, is a $\mathbb P_x$-martingale for all $\phi \in C^{2}(B(0,1+\delta))$. 
Moreover, $\tau$ as well as the stopping times defined in the proof below are finite a.s., see for example  Exercise 40.1 in \cite{bass11}.  

 We may define
\[
\begin{split}
v(x)=\mathbb E^x[u(X(\tau))].
\end{split}
\]

\begin{theorem}
\label{thm:main-1-cont-time}
Let $u:B(0,1+\gamma)\to \R$ be a $p$-harmonic function, $1<p<\infty$, such that  $\abs{\nabla u(x)}>0$ whenever $x\neq x_1$, where $x_1 \in B(0,1)$. Further, let $v$ be as above. Then $v=u$ in $B(0,1)$.\end{theorem}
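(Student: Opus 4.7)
My strategy is to apply the martingale identity \eqref{eq:ito} on the region where $u$ is smooth, and then shrink the ``no-go zone'' around $x_1$ to the single point $\{x_1\}$, using the classical fact that a uniformly elliptic diffusion in $\rn$ with $n\ge 2$ almost surely does not hit isolated points.

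Assume first $x\ne x_1$, and for $0<\rho<\half|x-x_1|$ introduce
\[
\sigma_\rho=\inf\{t\ge 0:X(t)\in \overline{B(x_1,\rho)}\},\qquad \tilde\tau_\rho=\sigma_\rho\wedge \tau.
\]
Since $u$ is real analytic on $B(0,1+\gamma)\setminus\{x_1\}$, I would build $u_\rho\in C^2(B(0,1+\gamma/2))$ that coincides with $u$ outside $B(x_1,\rho/2)$ by a standard Whitney-type extension through $B(x_1,\rho/2)$; the boundary data on $\partial B(x_1,\rho/2)$ are smooth, so this poses no difficulty. Plugging $u_\rho$ into \eqref{eq:ito} and invoking optional stopping at the a.s.\ finite time $\tilde\tau_\rho$ gives
\[
u(x)=u_\rho(x)=\E^x[u_\rho(X(\tilde\tau_\rho))]-\E^x\int_0^{\tilde\tau_\rho}\sum_{i,j=1}^n a_{ij}(X(s))(u_\rho)_{ij}(X(s))\,ds.
\]
On $[0,\tilde\tau_\rho)$ the path stays in $B(0,1)\setminus \overline{B(x_1,\rho)}$, where $u_\rho\equiv u$ and, by the very definition of $\mathcal A$ together with $p$-harmonicity of $u$ away from $x_1$, $\sum_{i,j}a_{ij}u_{ij}\equiv 0$. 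The integral therefore vanishes, and since $X(\tilde\tau_\rho)\notin B(x_1,\rho/2)$ we also have $u_\rho(X(\tilde\tau_\rho))=u(X(\tilde\tau_\rho))$, yielding
\[
u(x)=\E^x\bigl[u(X(\tilde\tau_\rho))\bigr] \qquad \text{for every small } \rho>0.
\]

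The remaining step is to send $\rho\to 0$. Since $\mathcal A$ is uniformly elliptic and $n\ge 2$, a classical fact (single points are polar for non-degenerate elliptic diffusions; see e.g.\ \cite{bass98,stroockv06}) gives $\P^x\bigl(X(t)=x_1 \text{ for some }t\in[0,\tau]\bigr)=0$. On this full-measure event the trajectory $X([0,\tau])$ is a compact subset of $\overline{B(0,1)}$ missing $x_1$, so $\dist\bigl(X([0,\tau]),x_1\bigr)>0$; for $\rho$ below this random distance we have $\tilde\tau_\rho=\tau$ and $X(\tilde\tau_\rho)=X(\tau)$. Bounded convergence (using boundedness of $u$ on $\overline{B(0,1)}$) then yields $\E^x[u(X(\tilde\tau_\rho))]\to v(x)$, so $u(x)=v(x)$. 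The case $x=x_1$ follows by continuity of $v$ at $x_1$, obtained by applying the strong Markov property at the first exit time from $B(x_1,\rho)$ and letting $\rho\to 0$.

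The main obstacle is precisely this last passage to the limit: $x_1$ is the unique point at which both $\mathcal A$ and $u$ fail to be smooth, and the entire identification only works because the diffusion almost surely avoids it. Once the non-hitting of the singleton $\{x_1\}$ is granted, everything else --- smoothness of $u$ off $x_1$, uniform ellipticity of $\mathcal A$, optional stopping, and the $C^2$ cutoff $u_\rho$ --- is routine. In particular the specific value $a_{ij}(x_1)=\delta_{ij}/2$ plays no role in the final identity; any bounded measurable completion preserving well-posedness of the martingale problem would produce the same $v$.
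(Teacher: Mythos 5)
There is a genuine gap, and it sits exactly at the step your whole plan hinges on: the assertion that ``single points are polar for non-degenerate elliptic diffusions in $\rn$, $n\ge 2$.'' That statement is simply not true for diffusions whose matrix is only bounded, measurable and uniformly elliptic, which is the situation here ($\mathcal A$ is discontinuous at $x_1$, since $\nabla u/\abs{\nabla u}$ in general has no limit at a zero of the gradient). A standard counterexample is $a(x)=I+(\Lambda-1)\frac{(x-x_1)\otimes(x-x_1)}{\abs{x-x_1}^2}$: on radial test functions its generator is $\Lambda\,\partial_{rr}+\frac{n-1}{r}\partial_r$, i.e.\ a time-changed Bessel process of dimension $1+\frac{n-1}{\Lambda}<2$ as soon as $\Lambda>n-1$, and such a process hits $x_1$ with positive probability. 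Whether points are polar thus depends on the ellipticity ratio \emph{and} on how the enhanced eigendirection is oriented relative to $x-x_1$ near the point. In the present setting the enhanced direction is $\nabla u/\abs{\nabla u}$; a barrier of the form $\abs{x-x_1}^{-s}$ gives $\sum a_{ij}\varphi_{ij}\le 0$ only when $(s+2)\max(p-1,1)\le n+p-2$, i.e.\ essentially when $p<n$ (and it always fails in the plane for $p\neq 2$), so for general $1<p<\infty$ non-hitting cannot be deduced from ellipticity alone; it would require structural information about $\nabla u$ near its critical point, which is precisely the difficulty the theorem is designed to avoid and which is nowhere established. Note also that if $X$ does hit $x_1$ before $\tau$ with positive probability, your limit $\rho\to0$ only yields $u(x)=\E^x[u(X(\sigma\wedge\tau))]$ with $\sigma$ the hitting time of $x_1$, and upgrading this to $u(x)=v(x)$ would need exactly $v(x_1)=u(x_1)$ on the hitting event --- circular.

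The rest of your argument is sound (the stopped martingale identity with the $C^2$ truncation $u_\rho$, optional stopping, and the reduction of the case $x=x_1$ to $x\neq x_1$ via the exit time from $B(x_1,\rho)$), and for $2\le p<n$ your route can in fact be completed with the power barrier above. But the paper proceeds quite differently and, importantly, never needs to know whether the diffusion visits $x_1$: it runs a comparison argument with the cone $f(x)=\eta(1-\abs{x-x_1})$, which by \eqref{eq:f-strict} is a strict supersolution of $\sum a_{ij}\varphi_{ij}=0$ away from $x_1$, handles near-maximum points $x_0\notin \ol B(x_1,r)$ by optional stopping for $u$, $v$ and $f$, and handles $x_0\in\ol B(x_1,r)$ by a ``delayed evaluation'' at the exit from $B(x_1,2r)$ using only the $C^{1,\a}$ bound \eqref{eq:c1a} and $\nabla u(x_1)=0$, which produces an error $Cr^{1+\a}$ beaten by the cone. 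To repair your proof for all $1<p<\infty$ you would either have to prove the non-hitting of $x_1$ for this specific $\mathcal A$ (a nontrivial claim in its own right) or switch to an argument, like the paper's, that is insensitive to it.
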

\begin{proof}
Let $\eta>0$ and set  $f(x):=\eta(1-\abs{x-x_1})$ and $z:=x-x_1$.  It holds whenever $x\neq x_1$ that 
\[
\begin{split}
\nabla f(x)=-\eta\frac{z}{\abs z},\qquad D^2f(x)=\frac{\eta}{\abs z} \left( \frac{z\otimes z}{\abs{z}^2}-I \right),
\end{split}
\]
and
\begin{equation}
\label{eq:f-strict}
\begin{split}
&\sum_{i,j=1}^n a_{ij}(x)f_{ij}(x)\\
&=
\tr\Bigg(\left(I+(p-2)\frac{\nabla u(x)\otimes \nabla u(x)}{\abs{\nabla u(x)}^2}\right )\frac{\eta}{\abs z} \left( \frac{z\otimes z}{\abs{z}^2}-I \right)\Bigg)\\
&=\frac{\eta}{\abs z} \tr\Bigg( \frac{z\otimes z}{\abs{z}^2}+(p-2)\frac{\nabla u(x)}{\abs{\nabla u(x)}}(\frac{\nabla u(x)}{\abs{\nabla u(x)}})^T\frac{z}{\abs z} (\frac{z}{\abs z})^T \\
&\hspace{12 em}-I-(p-2)\frac{\nabla u(x)\otimes \nabla u(x)}{\abs{\nabla u(x)}^2}\Bigg)\\
&=\frac{\eta}{\abs z}\Big(1+(p-2)(\frac{\nabla u(x)}{\abs{\nabla u(x)}}\cdot \frac{z}{\abs z})^2-n-(p-2)\Big)\le \frac{\eta}{\abs z}(1-n)<0.
\end{split}
\end{equation}
Denote $g(x):=\abs{v(x)-u(x)}$. We aim at showing $g(x)-f(x)\le 0$. Thriving for a contradiction, let $\kappa\in (0,\eta/2)$ and suppose  that there is $M>0$ and $x_0\in B(0,1)$ such that
\begin{equation}
\label{eq:counter-cont}
\begin{split}
g(x_0)-f(x_0)+\kappa\ge M:=\sup_{B(0,1)}( g(x)-f(x)).
\end{split}
\end{equation}

Then we fix $r$ and again split the argument into two cases: Now if $x_0\in B(0,1)\setminus \ol B(x_1,r)$, take a ball $B(x_0,\delta)\subset B(0,1)\setminus \ol B(x_1,r)$, and stopping time $\tau^*$ as the first exit time from $B(x_0,\delta)$. Then by the strong Markov property, we have
\begin{equation}
\label{eq:v-expt}
\begin{split}
v(x_0)=\mathbb E^{x_0}[v(X(\tau^*))].
\end{split}
\end{equation}
Moreover,  $u$ is a smooth $p$-harmonic function at the vicinity of $B(x_0,\delta)$ so that
\[
\begin{split}
u(X(t\wedge \tau^*))&-u(x_0)-\int_0^{t\wedge \tau^*} \sum_{i,j=1}^n a_{ij}(X(s))u_{ij}(X(s)) \ud s\\
&=u(X(t\wedge \tau^*))-u(x_0)
\end{split}
\]  
is a martingale. By the optional stopping theorem
\begin{equation}
\begin{split}
\label{eq:u-expt}
u(x_0)=\mathbb E^{x_0}[u(X(\tau^*))].
\end{split}
\end{equation}
Similarly,
\[
\begin{split}
f(X(t\wedge \tau^*))-f(x_0)-\int_0^{t\wedge \tau^*} \sum_{i,j=1}^n a_{ij}(X(s))f_{ij}(X(s)) \ud s&
\end{split}
\]  
is a martingale, and the optional stopping theorem combined with (\ref{eq:f-strict}) gives
\begin{equation}
\label{eq:f-expt}
\begin{split}
-\kappa +f(x_0)>\mathbb E^{x_0}[f(X(\tau^*))],
\end	{split}
\end{equation}
for all small enough $\kappa$.
Thus by (\ref{eq:counter-cont}), (\ref{eq:v-expt}), (\ref{eq:u-expt}) and (\ref{eq:f-expt})
\[
\begin{split}
M-\kappa&\le g(x_0)-f(x_0)=\abs{v(x_0)-u(x_0)}-f(x_0)\\
&\le\mathbb E^{x_0}[\abs{v(X(\tau^*))-u(X(\tau^*))}-f(X(\tau^*))]+\mathbb E^{x_0}[f(X(\tau^*))]-f(x_0)\\
&=\mathbb E^{x_0}[g(X(\tau^*))-f(X(\tau^*))]+\mathbb E^{x_0}[f(X(\tau^*))]-f(x_0)<M-\kappa,
\end{split}
\] 
a contradiction.

If $x_0\in \ol B(x_1,r)$, we define a stopping time $\ol \tau$ as the first exit time from $B(x_1,2r)$. First we observe that 
\[
\begin{split}
\mathbb E^{x_0}[\abs{u(X(\ol \tau)-u(x_0)}]&\le \mathbb E^{x_0}[\abs{u(X(\ol \tau)-u(x_1)}+\abs{u(x_1)-u(x_0)}]\le 2C r^{1+\a}.
\end{split}
\]
Then we estimate 
\[
\begin{split}
M-\kappa&\le g(x_0)-f(x_0)=\abs{v(x_0)-u(x_0)}-f(x_0)\\
&=\abs{\mathbb E^{x_0}[v(X(\ol \tau))]-u(x_0)}-f(x_0)\\
&\le \mathbb E^{x_0}[\abs{v(X(\ol \tau))-u(X(\ol \tau))}]-f(x_0)+Cr^{1+\a}\\
&\le \mathbb E^{x_0}[\abs{v(X(\ol \tau))-u(X(\ol \tau))}-f(X(\ol \tau))]+\mathbb E^{x_0}[f(X(\ol \tau))]-f(x_0)+Cr^{1+\a}\\
&\le M+\mathbb E^{x_0}[f(X(\ol \tau))]-f(x_0)+Cr^{1+\a},
\end{split}
\]
where the second step holds by the Markov property for $v$, and third step by $C^{1,\a}$ regularity of $u$ and $\nabla u(x_1)=0$. 
This implies that
\[
\begin{split}
-\kappa+f(x_0)\le \mathbb E^{x_0}[f(X(\ol \tau))]+ Cr^{1+\a}
\end{split}
\]
which is a contradiction by the form of $f$ when $\kappa$ and $r$ are small enough. Since $\eta>0$ was arbitrary this proves the claim.
\end{proof}

\subsection{Gradient vanishing in a finite  set of points}
Next we return to the discrete time setting. If the gradient vanishes in a finite  set of points $E$, then the above method can easily be  modified to prove that the gradient walk approximates the original 
$p$-harmonic function by only changing the process on $E$. 
More precisely, using the notation from Section \ref{sec:setup}, let
us define for a given $p$-harmonic function $u:B(0,1+\gamma)\to\re$, $\gamma>0$, $x\in B(0,1)$, and $\eps>0$ that   
\begin{equation}\nonumber
\mu_{x}=\begin{cases}
         \mu_{x,1},&\text{ if }|\nabla u(x)|> 0\,\,,\,\text{ and }\\
         \mathcal{L}_{B(x,\eps)},&\text{ if }|\nabla u(x)|= 0\,.
        \end{cases}
\end{equation}
Then set similarly as before
\begin{equation}\nonumber
u_{\eps}(x):=\mathbb{E}^x[u(x_{\tau})]\,.
\end{equation}

\begin{theorem}
\label{thm:main-many}
Let $u:B(0,1+\gamma)\to \R$ be a $p$-harmonic function such that   $\abs{\nabla u(x)}>0$ outside a finite set of points $E\subset B(0,1)$. Then 
\begin{equation}\nonumber
|u_{\eps}-u|\to 0 \text{ uniformly in $B(0,1)$ as }\eps\to 0\,. 
\end{equation}
\end{theorem}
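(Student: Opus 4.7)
The plan is to adapt the strategy of Theorem \ref{thm:main-1-cont-time} to the discrete setting of this section. Let $E = \{x_1, \ldots, x_m\}$ denote the zero set of $\nabla u$. Fix a small $\eta > 0$ and a constant $c > 2$, and introduce the comparison function
\[
f(x) := \eta \sum_{j=1}^{m} \bigl(c - |x - x_j|\bigr).
\]
Each summand $f_j(x) := \eta(c - |x - x_j|)$ is a discrete supersolution with respect to the transition measures $\mu_x$: away from $E$ this follows from Proposition \ref{aux1} applied with centre $x_j$, and at any $x_\ell \in E$ it follows from the uniform measure $\mathcal{L}_{B(x_\ell,\eps)}$ satisfying $\intav_{B(x_\ell,\eps)} |y - x_j|\,dy \geq |x_\ell - x_j|$. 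Setting $g(x) := |u_\eps(x) - u(x)|$, we argue by contradiction: assume $M := \sup_{B(0,1)}(g - f) > 0$ and pick $\kappa > 0$ and $x_0 \in B(0,1)$ with $g(x_0) - f(x_0) \geq M - \kappa$. Choose $r > 0$ so small that the balls $\overline{B(x_j, 2r)}$ are pairwise disjoint and contained in $B(0,1)$, and $C'' r^\alpha < \eta/4$, where $\alpha$ comes from the $C^{1,\alpha}$-estimate (\ref{eq:c1a}) and $C''$ is fixed below.

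If $x_0 \notin \bigcup_j B(x_j, r)$, then $|\nabla u(x_0)| > 0$ and $\mu_{x_0} = \mu_{x_0,1}$; the single-step argument of Theorem \ref{thm:main-1} using (\ref{eq:smoothpoint}) and Proposition \ref{aux1} applies essentially verbatim and yields a contradiction for small enough $\eps$, because the drift of $f$ under $\mu_{x_0,1}$ is of order $\eta \eps^2$ while the error from (\ref{eq:smoothpoint}) is only $C\eps^3$.

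The heart of the proof is the second case: $x_0 \in B(x_j, r)$ for some $j$. Define $\bar\tau_j$ as the first exit time of the walk from $B(x_j, 2r)$; our choice of $r$ ensures $\bar\tau_j \leq \tau$, and the strong Markov property yields $u_\eps(x_0) = \mathbb{E}^{x_0}[u_\eps(x_{\bar\tau_j})]$. From $\nabla u(x_j) = 0$ and (\ref{eq:c1a}) we have $|u(y) - u(x_j)| \leq C'|y - x_j|^{1+\alpha}$, so combining with $|x_0 - x_j| \leq r$ and $|x_{\bar\tau_j} - x_j| \leq 2r + \eps$ yields $\mathbb{E}^{x_0}[|u(x_{\bar\tau_j}) - u(x_0)|] \leq C'' r^{1+\alpha}$, hence
\[
g(x_0) \leq \mathbb{E}^{x_0}[g(x_{\bar\tau_j})] + C'' r^{1+\alpha}.
\]
Meanwhile, optional stopping applied to each supermartingale $f_i$ gives $\mathbb{E}^{x_0}[f(x_{\bar\tau_j})] \leq f(x_0)$, and the $i = j$ term drops strictly: $|x_{\bar\tau_j} - x_j| \geq 2r$ while $|x_0 - x_j| \leq r$ imply
\[
\mathbb{E}^{x_0}[f(x_{\bar\tau_j})] - f(x_0) \leq -\eta r.
\]
Combining with the maximality of $M$,
\[
M - \kappa \leq g(x_0) - f(x_0) \leq M - \eta r + C'' r^{1+\alpha} \leq M - \tfrac{3\eta r}{4},
\]
contradicting the arbitrariness of $\kappa$. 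Thus $g \leq f$ on $B(0,1)$, so $\norm{u_\eps - u}_{L^\infty(B(0,1))} \leq mc\eta$ whenever $\eps < \eps_0(\eta)$, and uniform convergence follows by letting $\eta \to 0$.

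The main obstacle is establishing the strict drift bound $\mathbb{E}^{x_0}[f(x_{\bar\tau_j})] - f(x_0) \leq -\eta r$: the function $f$ must be globally superharmonic (so that the unwanted cones $f_i$, $i \neq j$, do not spoil the drift after optional stopping) while simultaneously producing an explicit geometric decrease along $f_j$ from $B(x_j, r)$ to the exterior of $B(x_j, 2r)$. The $C^{1,\alpha}$-regularity of $u$ combined with $\nabla u(x_j) = 0$ is essential for absorbing the error produced by delaying the evaluation, mirroring the role of the same regularity in the continuous-time Theorem \ref{thm:main-1-cont-time}.
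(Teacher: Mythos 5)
Your proposal is correct and follows essentially the same route as the paper: a sum-of-cones comparison function centered at the zeros of the gradient, a contradiction argument at a near-maximum of $g-f$, the one-step argument of Theorem \ref{thm:main-1} away from $E$, and delayed evaluation at the exit time from a ball around the relevant zero, with Proposition \ref{aux1} supplying the drift for the remaining cones. The only cosmetic differences are that the paper bounds the delayed error via a $C^1$ modulus of continuity tied to the cone slope (rather than your $C^{1,\alpha}$ estimate with $\nabla u(x_j)=0$ and decoupled parameters $\eta, r$), and it verifies the stopped-distance inequality $\mathbb{E}^{x_0}\big[|x_{\bar\tau}-x_i|\big]\ge |x_0-x_i|$ by a self-contained sup-comparison argument instead of invoking optional stopping.
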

\begin{proof}
Let us denote   $E=\{x_1,x_2,\dots,x_N\}$. Since $u$ is $C^1$, there is strictly increasing $\phi:[0,\infty)\to [0,\infty)$, which is continuous and  $\phi(0)=0$, such that 
\begin{equation}\label{eq:growth}
|u(x)-u(x_i)|\leq \phi(|x-x_i|)|x-x_i|\,.
\end{equation}

Let us choose $\eta>0$ and  an auxiliary comparison function 
\begin{equation}\nonumber
f(x):=4\phi(2\eta)\sum_{i=1}^{N}(2-|x-x_i|)\,=:\,\sum_{i=1}^{N}f_i(x)\,.
\end{equation}

We again denote $g(x):=|\ut(x)-u(x)\,|$, and  show that
for $\eps$ small enough it holds that $g\leq f$.   
Assume that the claim is not true, so that for any $\kappa>0$ there is $x_0\in B(0,1)$
\begin{equation}\nonumber\label{eq:disc-counter}
g(x_0)-f(x_0)+\kappa\geq M:=\sup_{x\in B(0,1)}(g(x)-f(x))>0\,.
\end{equation}
 
 In the case $\dist(x_0,E)> \eta $ we have $|\nabla u(x_0)|\geq c>0$,
and the desired contradiction follows for small enough $\eps$ in the same way as in the proof of Theorem \ref{thm:main-1}.

Consider then the case $\dist(x_0,E)\leq \eta$. We may assume that $|x_0-x_1|\leq \eta$ and $|x_0-x_i|\geq 3\eta$ for all $2\leq i\leq N$. 
In this case, let  $\tau*<\tau$ be the first exit time from $B(x_1,2 \eta)$.
Then, using the strong Markov property of a Markov chain, \cite{meynt09} Proposition 3.4.6, and the estimate (\ref{eq:growth}), we have
\begin{align*}
M-\kappa &\leq g(x_0)-f(x_0)=|\ut(x_0)-u(x_0)|-f(x_0)\\
&=|\mathbb{E}^{x_0}[\ut(x_{\tau*})]-u(x_0)|-f(x_0)\\
&\leq \mathbb{E}^{x_0}\big[|\ut(x_{\tau*})-u(x_{\tau*})|\big]+\mathbb{E}^{x_0}\big[|u(x_{\tau*})-u(x_0)|\big]-f(x_0)\\
&\leq \mathbb{E}^{x_0}\big[|\ut(x_{\tau*})-u(x_{\tau*})|\big]+3\phi(2\eta)\eta-f(x_0)\\
&= \mathbb{E}^{x_0}\big[|\ut(x_{\tau*})-u(x_{\tau*})|-f(x_{\tau*})\big]+\mathbb{E}^{x_0}\big[f(x_{\tau*})-f(x_0)\big]+3\phi(2\eta)\eta\,\\
&\leq M +\mathbb{E}^{x_0}\big[f(x_{\tau*})-f(x_0)\big]+3\phi(2\eta)\eta\,.
\end{align*}
Above we estimated $\mathbb{E}^{x_0}\big[|u(x_{\tau*})-u(x_0)|\big]\le |u(x_{\tau*})-u(x_1)|+|u(x_1)-u(x_0)|\le 2\eta \phi(2\eta)+\eta\phi(\eta)\le 3\eta \phi(2\eta)$ by (\ref{eq:growth}).
Since $\kappa$ can be chosen to be arbitrary small, the desired contradiction follows, if we can show that
\begin{equation}\nonumber\label{equ1}
 \mathbb{E}^{x_0}\big[f(x_{\tau*})-f(x_0)\big]+3\phi(2\eta)\eta\,<0\,.
\end{equation}
For this, we compute that
\begin{align*}
&\mathbb{E}^{x_0}\big[f(x_{\tau*})-f(x_0)\big]=\mathbb{E}^{x_0}\big[\sum_{i=1}^Nf_i(x_{\tau*})-f_i(x_0)\big]\\
=&\mathbb{E}^{x_0}\big[f_1(x_{\tau*})-f_1(x_0)\big]+\mathbb{E}^{x_0}\big[\sum_{i=2}^Nf_i(x_{\tau*})-f_i(x_0)\big]\\
=&4\phi(2\eta)\bigg[\mathbb{E}^{x_0}\big[|x_0-x_1|-|x_{\tau*}-x_1|\big]+\mathbb{E}^{x_0}\big[\sum_{i=2}^N |x_0-x_i|-|x_{\tau*}-x_i|\big]\bigg]\\
\leq &4\phi(2\eta)\bigg[-\eta+\sum_{i=2}^N\Big(|x_0-x_i|-\mathbb{E}^{x_0}\big[|x_{\tau*}-x_i|\big]\Big)\bigg]\,,
\end{align*}
implying that the desired inequality follows if 
\begin{equation}\label{equ2}
|x_0-x_i|\le \mathbb{E}^{x_0}\big[|x_{\tau*}-x_i|\big]
\end{equation}
for all $2\leq i\leq N\,$. 

Now, suppose that (\ref{equ2}) is not true, so that for any $\kappa>0$, there is $z_0\in B(x_1,2\eta)$ such that
\begin{equation}\nonumber
0<M_e:=\sup_{z\in B(x_1,2\eta)}\big(|z-x_i|-e(z)\big)\leq |z_0-x_i|-e(z_0)+\kappa\,,
\end{equation}
 where we denoted
  $$
  e(z):=\mathbb{E}^{z}\big[|z_{\tau*}-x_i|\big].
  $$ 
 It holds that
\begin{equation}\nonumber
e(z_0)=\int_{\rn}e(y)d\mu_{z_0}(y).
\end{equation}
We proceed with the similar reasoning as before
\begin{align*}
M_e \leq &|z_0-x_i|-e(z_0)+\kappa\,=\,|z_0-x_i|-\int_{\rn}e(y)d\mu_{z_0}(y)\,+\kappa\\
=& |z_0-x_i|-\int_{\rn}|y-x_i|d\mu_{z_0}(y)\,+\int_{\rn}\big(|y-x_i|-e(y)\big)d\mu_{z_0}(y)\,+\kappa\\
\leq & |z_0-x_i|-\int_{\rn}|y-x_i|d\mu_{z_0}(y)+M_e+\kappa\,.
\end{align*}
Observe above that if $y$ is outside $B(x_1,2 \eta)$, then  $|y-x_i|-e(y)=0<M_e$.
By substracting $M_e$ from the inequality, the desired contradiction follows by Proposition \ref{aux1}, which guarantees that 
\begin{equation}\nonumber
 |z_0-x_i|<\int_{\rn}|y-x_i|d\mu_{z_0}(y)-\kappa\,
\end{equation}
for all $z_0\in B(x_1,2\eta)\,$. 
\end{proof}

%

\noindent {\bf Acknowledgements.} The second author was supported by the Academy of
Finland project \#260791. The authors would like to thank C.\ Geiss for useful discussions.

\def\cprime{$'$} \def\cprime{$'$} \def\cprime{$'$}


\end{document}